\numberwithin{equation}{section}
\newtheorem{theorem}{Theorem}[section]
\newtheorem{corollary}{Corollary}[section]
\newtheorem{proposition}{Proposition}[section]
\title{A family of sequences of binomial type}
\author{Wojciech M{\l}otkowski, Anna Romanowicz}
\thanks{
W.~M. is supported by the Polish
National Science Center grant No. 2012/05/B/ST1/00626,
A.~R. is supported by the Grant DEC-2011/02/A/ST1/00119
of National Centre of Science.}
\address{Instytut Matematyczny,
Uniwersytet Wroc{\l}awski,
Plac~Grunwaldzki~2/4,
50-384 Wroc{\l}aw, Poland}
\email{mlotkow@math.uni.wroc.pl}
\email{annaromanowicz85@gmail.com}
\subjclass[2010]{Primary 05A40; Secondary 60E07, 44A60}
\keywords{Sequence of binomial type, Bessel polynomials, inverse Gaussian distribution}
\begin{document}


\begin{abstract}
For delta operator $aD-bD^{p+1}$
we find the corresponding
polynomial sequence of binomial type
and relations with Fuss numbers.
In the case $D-\frac{1}{2}D^2$ we show that the corresponding
Bessel-Carlitz polynomials are moments of the convolution semigroup
of inverse Gaussian distributions.
We also find probability distributions $\nu_{t}$, $t>0$, for which
$\left\{y_{n}(t)\right\}$, the Bessel polynomials at $t$, is the moment sequence.
\end{abstract}

\maketitle

\section{Introduction}

A sequence $\left\{w_{n}(t)\right\}_{n=0}^{\infty}$ of polynomials is said to be of \textit{binomial type}
(see \cite{roman}) if $\deg w_{n}(t)=n$ and for every $n\ge0$ and $s,t\in\mathbb{R}$ we have
\begin{equation}\label{aintrbinomial}
w_{n}(s+t)=\sum_{k=0}^{n}\binom{n}{k}w_{k}(s)w_{n-k}(t).
\end{equation}

A linear operator $Q$ of the form
\begin{equation}\label{aintrdelta}
Q=\frac{c_1}{1!} D+\frac{c_2}{2!} D^2+\frac{c_3}{3!} D^3+\ldots,
\end{equation}
acting on the linear space $\mathbb{R}[x]$ of polynomials,
is called a \textit{delta operator} if $c_1\ne 0$.
Here $D$ denotes the derivative operator: $D1:=0$ and $D t^n:=n\cdot t^{n-1}$
for $n\ge1$. We will write $Q=g(D)$, where
\begin{equation}\label{aintrg}
g(x)=\frac{c_1}{1!}x+\frac{c_2}{2!} x^2+\frac{c_3}{3!} x^3+\ldots.
\end{equation}

There is one-to-one correspondence between sequences of binomial type
and delta operators, namely if $Q$ is a delta operator then there
is unique sequence $\left\{w_{n}(t)\right\}_{n=0}^{\infty}$ of binomial type
satisfying $Qw_{0}(t)=0$ and $Qw_{n}(t)=n\cdot w_{n-1}(t)$ for $n\ge1$.
These $w_{n}$ are called \textit{basic polynomials} for $Q$.

A natural way of obtaining a sequence of binomial type is to start with
a function $f$
which is analytic in a neighborhood of $0$:
\begin{equation}\label{aintrf}
f(x)=\frac{a_1}{1!}x+\frac{a_2}{2!} x^2+\frac{a_3}{3!} x^3+\ldots,
\end{equation}
with $a_1\ne0$.
Then a binomial sequence appears
in the Taylor expansion of $\exp\left(t\cdot f(x)\right)$, namely
\begin{equation}\label{aintrexp}
\exp\left(t\cdot f(x)\right)=\sum_{n=0}^{\infty}\frac{w_{n}(t)}{n!}x^n
\end{equation}
and the coefficients of $w_n$ are partial Bell polynomials of $a_{1},a_2,\ldots$.
More generally, we can merely assume that $f$ (as well as $g$) is a formal power series.
Then $f$ is the composition inverse of $g$: $f(g(x))=g(f(x))=x$.

The aim of the paper is to describe the sequences 
of polynomials of binomial type, which correspond to delta operators of the form
$Q=a D-b D^{p+1}$.
Then we discuss the special case, 
$D-\frac{1}{2}D^2$, studied by Carlitz \cite{carlitz}.
We find the corresponding semigroup of probability distributions
an also the family of distributions corresponding to the Bessel polynomials.

\section{The result}

\begin{theorem}
For $a\ne0$, $b\in\mathbb{R}$ and for $p\ge1$ let $Q:=a D-b D^{p+1}$.
Then the basic polynomials are as follows:
$w_0(t)=1$ and for $n\ge1$
\begin{equation}\label{bwn}
w_n(t)=\sum_{j=0}^{\left[\frac{n-1}{p}\right]}\frac{(n+j-1)!b^{j}}{j!\left(n-jp-1\right)!a^{n+j}}t^{n-jp}.
\end{equation}
In particular $w_1(t)=t/a$.
\end{theorem}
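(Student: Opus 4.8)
The plan is to avoid computing the composition inverse $f$ of $g(x)=ax-bx^{p+1}$ and instead verify \eqref{bwn} against the defining properties of a basic sequence. By the uniqueness recalled in the introduction, and since $Q=aD-bD^{p+1}$ lowers degrees by exactly one (so $\ker Q$ consists of the constants), the basic sequence of $Q$ is the unique sequence of polynomials with $w_0=1$, with $w_n(0)=0$ for every $n\ge1$, and with $Qw_n=n\,w_{n-1}$ for every $n\ge1$. For the polynomials defined by \eqref{bwn} the first property is built in, and $w_n(0)=0$ for $n\ge1$ holds because the smallest exponent occurring in \eqref{bwn} is $n-\left[\frac{n-1}{p}\right]p\ge1$. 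Thus the whole proof comes down to the recurrence $Qw_n=n\,w_{n-1}$, an identity between two finite sums of monomials.

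To establish that identity I would write $w_n(t)=\sum_{j}c_{n,j}\,t^{\,n-jp}$ with $c_{n,j}=\dfrac{(n+j-1)!\,b^{j}}{j!\,(n-jp-1)!\,a^{\,n+j}}$, adopting the convention $1/m!=0$ for $m<0$ so that the range of $j$ looks after itself. Applying $Q=aD-bD^{p+1}$ term by term, with $D^{p+1}t^{m}=\dfrac{m!}{(m-p-1)!}\,t^{\,m-p-1}$, the summand $aD$ carries the $j$-th term of $w_n$ to a multiple of $t^{\,n-jp-1}$, while $-bD^{p+1}$ carries it to a multiple of $t^{\,n-(j+1)p-1}$. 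Hence the coefficient of $t^{\,n-kp-1}$ in $Qw_n$ receives contributions from exactly two terms of $w_n$: the $j=k$ term through $aD$ and the $j=k-1$ term through $-bD^{p+1}$. After pulling out the common factor $\dfrac{b^{k}(n+k-2)!}{k!\,(n-kp-1)!\,a^{\,n+k-1}}$, the remaining bracket collapses:
\begin{equation*}
(n+k-1)(n-kp)-k\,(n-kp+p)=n\,(n-1-kp).
\end{equation*}
Since $\dfrac{n-1-kp}{(n-kp-1)!}=\dfrac{1}{(n-kp-2)!}$, the coefficient of $t^{\,(n-1)-kp}$ in $Qw_n$ equals $n\cdot\dfrac{(n+k-2)!\,b^{k}}{k!\,(n-kp-2)!\,a^{\,n+k-1}}=n\,c_{n-1,k}$, which is precisely the coefficient of $t^{\,(n-1)-kp}$ in $n\,w_{n-1}(t)$. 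This proves $Qw_n=n\,w_{n-1}$, hence the theorem; the stated value $w_1(t)=t/a$ is the case $n=1$ of \eqref{bwn}.

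I expect the algebra to be routine, the collapse of the bracket to $n(n-1-kp)$ being the only real computation, and the main obstacle to be bookkeeping at the ends of the summation range. The $k=0$ coefficient has no $j=-1$ contribution, and one must check this does not break the match; for the top value $k=\left[\frac{n-1}{p}\right]$ one may have $n-kp-1=0$, so a priori $Qw_n$ could acquire a constant term, but the factor $n-1-kp$ kills it — consistently with $w_{n-1}(0)=0$. Getting the negative-factorial convention and these extreme cases straight is the only delicate point.

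As a clean alternative I would keep Lagrange inversion in reserve: writing $x=g(y)=y\,(a-by^{p})$ as $y=x\,\Phi(y)$ with $\Phi(u)=(a-bu^{p})^{-1}$, the Lagrange--B\"urmann formula gives $\dfrac{w_n(t)}{n!}=[x^{n}]\,e^{t f(x)}=\dfrac1n\,[u^{n-1}]\!\left(t\,e^{tu}\,(a-bu^{p})^{-n}\right)$; then expanding $(a-bu^{p})^{-n}=a^{-n}\sum_{j\ge0}\binom{n+j-1}{j}\left(\tfrac ba\right)^{j}u^{jp}$ together with $e^{tu}=\sum_{m\ge0}t^{m}u^{m}/m!$ and extracting the coefficient of $u^{n-1}$ reproduces \eqref{bwn} after a factorial simplification. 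This trades the end-of-range bookkeeping for the standard care needed when applying Lagrange inversion in the ring of formal power series.
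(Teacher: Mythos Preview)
Your main argument is correct and is essentially the paper's own proof: both compute $Qw_n$ term by term, pair the $aD$ contribution at index $j=k$ with the $bD^{p+1}$ contribution at $j=k-1$, and use the same collapse $(n+k-1)(n-kp)-k(n-kp+p)=n(n-kp-1)$, with the same boundary observation that the factor $n-kp-1$ kills the potential extra term when $kp+1=n$. You are in fact slightly more explicit than the paper about why verifying the recurrence suffices (checking $w_n(0)=0$ and invoking uniqueness), and your Lagrange--B\"urmann alternative is a genuinely different derivation that the paper does not give.
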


\begin{proof}
We have to show that if $n\ge1$ then $Q w_{n}(t)=n\cdot w_{n-1}(t)$.
It is obvious for $n=1$, so assume that $n\ge2$.
Then we have
\[
a Dw_n(t)=\sum_{j=0}^{\left[\frac{n-1}{p}\right]}\frac{(n+j-1)!(n-jp)b^j}{j!\left(n-jp-1\right)!a^{n-1+j}}t^{n-1-jp}
\]
\[
=n \left(\frac{t}{a}\right)^{n-1}
+\sum_{j=1}^{\left[\frac{n-1}{p}\right]}\frac{(n+j-1)!(n-jp)b^j}{j!\left(n-jp-1\right)!a^{n-1+j}}t^{n-1-jp}
\]
and
\[
b D^{p+1}w_n(t)=\sum_{j=0}^{\left[\frac{n-p-1}{p}\right]}
\frac{(n+j-1)!(n-jp)b^{j+1}}{j!\left(n-jp-p-1\right)!a^{n+j}}t^{n-1-jp-p}.
\]
Now we substitute $j':=j+1$:
\[
b D^{p+1}w_n(t)=\sum_{j'=1}^{\left[\frac{n-1}{p}\right]}
\frac{(n+j'-2)!(n-j'p+p)b^{j'}}{(j'-1)!\left(n-j'p-1\right)!a^{n-1+j'}}t^{n-1-j'p}.
\]
If $j\ge1$ and $jp+1<n$ then
\[
\frac{(n+j-1)!(n-jp)}{j!(n-jp-1)!}-\frac{(n+j-2)!(n-jp+p)}{(j-1)!(n-jp-1)!}
\]
\[
=\frac{(n+j-2)!\big[(n+j-1)(n-jp)-j(n-jp+p)\big]}{j!(n-jp-1)!}
\]
\[
=\frac{n(n+j-2)!(n-jp-1)}{j!(n-jp-1)!}=\frac{n(n+j-2)!}{j!(n-jp-2)!}
\]
and if $
jp+1=n$ then this difference is $0$.
Therefore we have
\[
\left(aD-bD^{p+1}\right)w_{n}(t)=n \left(\frac{t}{a}\right)^{n-1}
+\sum_{j=1}^{\left[\frac{n-2}{p}\right]}\frac{n(n+j-2)!b^{j}}{j!\left(n-jp-2\right)!a^{n-1+j}}t^{n-1-jp}
\]
\[
=n\sum_{j=0}^{\left[\frac{n-2}{p}\right]}\frac{(n+j-2)!b^{j}}{j!\left(n-jp-2\right)!a^{n-1+j}}t^{n-1-jp}
=n\cdot w_{n-1}(t),
\]
which concludes the proof.
\end{proof}

Recall, that \textit{Fuss numbers of order $p$} are given
by
$\binom{np+1}{n}\frac{1}{np+1}$
and the corresponding generating function
\begin{equation}\label{bgeneratingp}
\mathcal{B}_{p}(x):=\sum_{n=0}^{\infty}\binom{np+1}{n}\frac{x^n}{np+1}
\end{equation}
is determined by the equation
\begin{equation}\label{bfussfunction}
\mathcal{B}_{p}(x)=1+x\cdot \mathcal{B}_{p}(x)^p.
\end{equation}
In particular
\begin{equation}\label{bcatalanfunction}
\mathcal{B}_{2}(x)=\frac{1-\sqrt{1-4x}}{2x}.
\end{equation}
For more details, as well as combinatorial applications, we refer to \cite{gkp}.

Now we can exhibit the function $f$ corresponding
to the operator $aD-bD^{p+1}$ and to the polynomials (\ref{bwn}).

\begin{corollary}\label{bcorollary}
For the polynomials (\ref{bwn}) we have
\[
\exp\left(t\cdot f(x)\right)=\sum_{n=0}^{\infty}\frac{w_{n}(t)}{n!}x^n,
\]
where
\begin{equation}\label{bformulacor}
f(x)=\frac{x}{a}\cdot\mathcal{B}_{p+1}\left(\frac{bx^p}{a^{p+1}}\right).
\end{equation}
\end{corollary}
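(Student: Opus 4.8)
The goal is to identify the formal power series $f$ whose exponential generating function $\exp(tf(x))$ has Taylor coefficients $w_n(t)/n!$. Since $f$ is the composition inverse of $g(x)=ax-bx^{p+1}$, the cleanest route is to verify directly that the claimed $f$ satisfies $g(f(x))=x$, and then separately check that the generating function identity $\exp(tf(x))=\sum w_n(t)x^n/n!$ holds — but in fact, by the general theory recalled in the introduction, once we know $f$ is the inverse of $g$ the generating function identity is automatic. So the real content is: show that $f(x)=\frac{x}{a}\mathcal{B}_{p+1}\!\left(\frac{bx^p}{a^{p+1}}\right)$ is the compositional inverse of $g(x)=ax-bx^{p+1}$.

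**First approach: verify $g(f(x))=x$ via the Fuss functional equation.** Write $u=\frac{bx^p}{a^{p+1}}$, so that $f(x)=\frac{x}{a}\mathcal{B}_{p+1}(u)$. By \eqref{bfussfunction} with $p$ replaced by $p+1$, we have $\mathcal{B}_{p+1}(u)=1+u\,\mathcal{B}_{p+1}(u)^{p+1}$. Now compute
\[
g(f(x))=a f(x)-b f(x)^{p+1}
= x\,\mathcal{B}_{p+1}(u)-b\left(\frac{x}{a}\right)^{p+1}\mathcal{B}_{p+1}(u)^{p+1}
= x\,\mathcal{B}_{p+1}(u)-x\,u\,\mathcal{B}_{p+1}(u)^{p+1},
\]
using $b x^{p+1}/a^{p+1}=x u$. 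Factoring out $x$ gives $g(f(x))=x\bigl(\mathcal{B}_{p+1}(u)-u\,\mathcal{B}_{p+1}(u)^{p+1}\bigr)=x\cdot 1=x$ by the functional equation. This shows $f$ is a right inverse of $g$; since $g$ is a delta series (its linear coefficient $a$ is nonzero) its compositional inverse is unique, so $f$ is \emph{the} inverse, and then \eqref{aintrexp} applies verbatim.

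**Alternative, more self-contained approach: match Taylor coefficients directly.** If one prefers not to invoke the inverse-function characterization, expand $\mathcal{B}_{p+1}$ using \eqref{bgeneratingp}:
\[
f(x)=\frac{x}{a}\sum_{j=0}^\infty\binom{j(p+1)+1}{j}\frac{1}{j(p+1)+1}\left(\frac{b x^{p}}{a^{p+1}}\right)^{j}
=\sum_{j=0}^\infty\binom{j(p+1)+1}{j}\frac{1}{j(p+1)+1}\,\frac{b^j}{a^{jp+j+1}}\,x^{jp+1}.
\]
Then $\exp(tf(x))=\sum_{m\ge0}\frac{t^m}{m!}f(x)^m$, and raising this power series to the $m$-th power and collecting the coefficient of $x^n$ produces a multinomial sum over compositions of $n$; comparing with \eqref{bwn} would require a Fuss-number convolution identity (Lagrange inversion in disguise). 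This is messier, so I would relegate it to a remark and make the functional-equation argument the main proof.

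**Main obstacle.** There is no serious obstacle: the only subtlety is bookkeeping the substitution $u=bx^p/a^{p+1}$ so that the powers of $a$ and $b$ in $b f(x)^{p+1}$ land exactly on $x u\,\mathcal{B}_{p+1}(u)^{p+1}$, and invoking uniqueness of the compositional inverse to upgrade "right inverse" to "the inverse" (needed because the introduction's generating-function formula \eqref{aintrexp} is stated for the genuine inverse $f$ of $g$). Everything else is a one-line application of \eqref{bfussfunction}.
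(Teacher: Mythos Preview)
Your proof is correct and follows essentially the same route as the paper: both arguments hinge on the Fuss functional equation \eqref{bfussfunction} to identify $af(x)/x$ with $\mathcal{B}_{p+1}(bx^p/a^{p+1})$. The only cosmetic difference is direction---the paper starts from ``$f$ is the compositional inverse of $g$'' and manipulates $af(x)=x+bf(x)^{p+1}$ into the Fuss equation, whereas you start from the claimed formula and verify $g(f(x))=x$; these are the same computation read forward and backward.
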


\begin{proof}
Since $f$ is the inverse function for $g(x):=ax-b x^{p+1}$,
it satisfies $af(x)=x+bf(x)^{p+1}$, which is equivalent  to
\[
\left(\frac{a f(x)}{x}\right)=1+\frac{bx^p}{a^{p+1}}\left(\frac{a f(x)}{x}\right)^{p+1}.
\]
Comparing with (\ref{bfussfunction}), we see that $af(x)/x=\mathcal{B}_{p+1}\left(bx^p/a^{p+1}\right)$.

Alternatively, we could apply the formula $a_n=w_{n}'(0)$ for the coefficients in (\ref{aintrf}).
\end{proof}

\section{The operator $D-\frac{1}{2}D^2$}

One important source of sequences of binomial type are moments
of convolution semigroups of probability measures on the real line.
In this part we describe an example of such semigroup,
which corresponds to the delta operator $D-\frac{1}{2}D^2$.
For more details we refer to \cite{carlitz,grosswald,krallfrink,roman}
and to the entry $\mathrm{A001497}$ in~\cite{oeis}.

In view of (\ref{bwn}), the sequence of polynomials
of binomial type corresponding to the delta operator $D-\frac{1}{2}D^2$ is
\begin{equation}\label{cbinomialpoly}
w_n(t)=\sum_{j=0}^{n-1}\frac{(n+j-1)!\, t^{n-j}}{j!\left(n-j-1\right)!\,2^{j}}
=\sum_{k=1}^{n}\frac{(2n-k-1)!\,t^{k}}{(n-k)!\left(k-1\right)!\,2^{n-k}},
\end{equation}
with $w_{0}(t)=1$. They are related to the \textit{Bessel polynomials}
\begin{equation}\label{cbesselpolynomials}
y_{n}(t)=\sum_{j=0}^{n}\frac{(n+j)!}{j!\left(n-j\right)!}\left(\frac{t}{2}\right)^{j}
=e^{1/t}\sqrt{\frac{2}{\pi t}}K_{-n-1/2}\left(1/t\right),
\end{equation}
where $K_{\nu}(z)$ denotes the modified Bessel function of the second kind.
Namely, for $n\ge1$ we have
\begin{equation}\label{cwnbesselfunction}
w_{n}(t)=t^n y_{n-1}\left(1/t\right)
=t^n e^{t}\sqrt{\frac{2t}{\pi}} K_{1/2-n}(t)
\end{equation}
for $n\ge0$.

Applying Corollary~\ref{bcorollary} and (\ref{bcatalanfunction}) we get
\begin{equation}
f(x)=x\cdot \mathcal{B}_{2}(x/2)=1-\sqrt{1-2x}=\sum_{n=1}^{\infty}\frac{a_n}{n!}x^n,
\end{equation}
where $a_n=(2n-3)!!\,$.
The function $f(x\mathrm{i})$ admits
\textit{Kolmogorov representation} (see formula (7.15) in \cite{steutel}) as 
\begin{equation}
1-\sqrt{1-2x\mathrm{i}}=x\mathrm{i}
+\int_{0}^{+\infty}\big(e^{ux\mathrm{i}}-1-ux\mathrm{i}\big)\frac{e^{-u/2}}{\sqrt{2\pi u^3}}\,du,
\end{equation}
with the probability density function ${\sqrt{u}e^{-u/2}}/{\sqrt{2\pi}}$
on $[0,+\infty)$. Therefore
\[\phi(x)=\exp\left(1-\sqrt{1-2x\mathrm{i}}\right)\]
is the characteristic function of some infinitely
divisible probability measure.
It turns out that the corresponding convolution semigroup $\left\{\mu_t\right\}_{t>0}$
is contained in the family of
\textit{inverse Gaussian distributions}, see 24.3 in \cite{balakrishnannevzorov}.


\begin{theorem}\label{dthmcarlitzmeasure}
For $t>0$ define probability distribution $\mu_{t}:=\rho_{t}(u)\,du$, where
\begin{equation}\label{crhote}
\rho_{t}(u):=\frac{t\cdot \exp\left(\frac{-(u-t)^2}{2u}\right)}{\sqrt{2\pi u^3}}
\end{equation}
for $u>0$ and $\rho_{t}(u)=0$ for $u\le0$.
Then $\left\{\mu_t\right\}_{t>0}$ is a convolution semigroup,
$\exp\left(t-t\sqrt{1-2x\mathrm{i}}\right)$ is the characteristic function of $\mu_t$,
and $\left\{w_{n}(t)\right\}_{n=0}^{\infty}$, defined by (\ref{cbinomialpoly}),
is the moment sequence of $\mu_{t}$.
\end{theorem}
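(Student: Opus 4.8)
The plan is to establish the three assertions in sequence, since each one feeds into the next. First I would verify that $\rho_t$ is indeed a probability density and compute its characteristic function directly. Writing $\exp\left(-(u-t)^2/(2u)\right) = e^t \exp\left(-t^2/(2u) - u/2\right)$, one recognizes the integrand as essentially the kernel that produces a modified Bessel function $K_{1/2}$ (or equivalently an explicit elementary integral, since $K_{\pm 1/2}$ is elementary). The cleanest route is to quote the standard fact that the inverse Gaussian density with the stated parametrization has Laplace transform $\int_0^\infty e^{-su}\rho_t(u)\,du = \exp\left(t - t\sqrt{1+2s}\right)$; analytically continuing $s \mapsto -x\mathrm{i}$ gives the characteristic function $\exp\left(t - t\sqrt{1-2x\mathrm{i}}\right)$, and setting $s = 0$ (equivalently $x = 0$) confirms total mass $1$.

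Next, the semigroup property $\mu_s * \mu_t = \mu_{s+t}$ follows immediately from the characteristic-function formula, because $\exp\left(s - s\sqrt{1-2x\mathrm{i}}\right)\cdot\exp\left(t - t\sqrt{1-2x\mathrm{i}}\right) = \exp\left((s+t) - (s+t)\sqrt{1-2x\mathrm{i}}\right)$; uniqueness of the measure with a given characteristic function closes this point. This also reconnects with the earlier discussion: the function $f(x) = 1 - \sqrt{1-2x}$ is exactly the one from Corollary~\ref{bcorollary} in the case $a = 1$, $b = 1/2$, $p = 1$, and $\phi(x)^t = \exp\left(t\cdot f(x\mathrm{i})\right)$ matches the generating identity (\ref{aintrexp}) after the substitution $x \mapsto x\mathrm{i}$.

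For the moment statement I would argue that the moments of $\mu_t$ form a sequence of binomial type. Indeed, if $m_n(t) := \int_0^\infty u^n\,\rho_t(u)\,du$ denotes the $n$-th moment, then the semigroup property forces $m_n(s+t) = \sum_{k=0}^n \binom{n}{k} m_k(s) m_{n-k}(t)$ by expanding $(u+v)^n$ against the product measure $\mu_s \times \mu_t$. So $\{m_n(t)\}$ is of binomial type. To identify it with $\{w_n(t)\}$ it then suffices to check that it is the basic sequence for the operator $D - \tfrac12 D^2$, i.e. that the associated $f$ is $1 - \sqrt{1-2x}$; but the exponential generating function of $\{m_n(t)\}$ in $t$ is (formally) $\exp\left(t\cdot f(x\mathrm{i})\right)$ with $x\mathrm{i}$ replaced appropriately, which we read off from the characteristic function already computed, giving precisely $f(x) = 1 - \sqrt{1-2x}$. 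By the one-to-one correspondence between delta operators and binomial sequences, $m_n(t) = w_n(t)$. Alternatively, and perhaps more transparently, one can invoke the explicit Bessel-function formula: the moments of the inverse Gaussian distribution are known in closed form in terms of $K_{1/2-n}$, and (\ref{cwnbesselfunction}) already expresses $w_n(t)$ through exactly that Bessel function, so a direct comparison finishes the proof.

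The main obstacle is the careful justification of the moment computation: one must confirm that $\rho_t$ has finite moments of all orders (clear from the $e^{-u/2}$ decay), that the characteristic function is analytic in a strip so that moments are recovered by differentiation at $0$, and that the formal generating-function manipulation is legitimate. The Bessel-function identification route sidesteps the analytic subtleties but requires matching the standard inverse Gaussian moment formula against (\ref{cbesselpolynomials})--(\ref{cwnbesselfunction}), which is a bookkeeping exercise with half-integer-order $K_\nu$. Either way, the genuinely new content is light once the characteristic function $\exp\left(t - t\sqrt{1-2x\mathrm{i}}\right)$ is in hand; everything else is assembling pieces already present in the excerpt.
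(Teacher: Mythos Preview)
Your proposal is correct, and your \emph{alternative} route---computing the moments of $\rho_t$ directly via the Bessel-function integral and matching against (\ref{cwnbesselfunction})---is exactly what the paper does. The paper's proof consists of a single computation: substitute $u=2v$, invoke the integral representation $K_p(t)=\tfrac12(t/2)^p\int_0^\infty e^{-v-t^2/4v}\,v^{-p-1}\,dv$, and read off $w_n(t)$ from (\ref{cwnbesselfunction}); the characteristic-function and semigroup claims are treated as already known from the discussion of the Kolmogorov representation and the inverse Gaussian family preceding the theorem.

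Your primary route inverts the order: you first establish the characteristic function (via the inverse Gaussian Laplace transform), deduce the semigroup property by multiplication, and then recover the moments abstractly by arguing that the moment sequence of a convolution semigroup is of binomial type and is pinned down by the generating function $f(x)=1-\sqrt{1-2x}$. This is more self-contained regarding the measure-theoretic claims, at the cost of an extra layer of abstraction for the moment identification. The paper's approach is shorter but leans on the surrounding text (and on recognizing the inverse Gaussian) for two of the three assertions; your approach would stand alone. One small slip: where you write ``the exponential generating function of $\{m_n(t)\}$ in $t$'' you mean in the auxiliary variable $x$, with $t$ as parameter; the argument you intend---that $\sum_n m_n(t)x^n/n!=\exp\bigl(t(1-\sqrt{1-2x})\bigr)$ forces $m_n=w_n$ via (\ref{aintrexp})---is correct once this is straightened out.
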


\begin{proof}
It is sufficient to check moments of $\mu_{t}$. Substituting $u:=2v$
and applying formula:
\begin{equation}\label{cbesselfunctionformula}
K_{p}(t)=\frac{1}{2}\left(\frac{t}{2}\right)^{p}
\int_{0}^{\infty}\exp\left(-v-\frac{t^2}{4v}\right)\frac{dv}{v^{p+1}}
\end{equation}
(see  (10.32.10) in \cite{olver}), we obtain
\[
\int_{0}^{\infty}u^n\frac{t\cdot \exp\left(\frac{-(u-t)^2}{2u}\right)}{\sqrt{2\pi u^3}}\,du
=\frac{t e^t}{\sqrt{2\pi}}\int_{0}^{\infty}\exp\left(\frac{-u}{2}-\frac{t^2}{2u}\right)u^{n-3/2}\,du
\]
\[
=\frac{t e^t 2^{n-1}}{\sqrt{\pi}}\int_{0}^{\infty}\exp\left(-v-\frac{t^2}{4v}\right)v^{n-3/2}\,dv
=\frac{t e^t 2^{n-1}}{\sqrt{\pi}} 2\left(\frac{t}{2}\right)^{n-1/2}K_{1/2-n}(t),
\]
which, by (\ref{cwnbesselfunction}), is equal to $w_n(t)$.
\end{proof}

\section{Probability measures corresponding to the Bessel polynomials}

In this part we are going to give some remarks concerning Bessel polynomials~(\ref{cbesselpolynomials}).
First we compute the exponential generating function (cf. formula (6.2) in \cite{grosswald}).

\begin{proposition}\label{dbesselmoments}
For the exponential generating function of the sequence $\left\{y_{n}(t)\right\}$ we have
\begin{equation}\label{dgeneratingbessel}
\sum_{n=0}^{\infty}\frac{y_{n}(t)}{n!}x^n=\frac{\exp\left(\frac{1}{t}-\frac{1}{t}\sqrt{1-2tx}\right)}{\sqrt{1-2tx}}.
\end{equation}
\end{proposition}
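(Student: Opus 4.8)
The plan is to derive the generating function of $\{y_n(t)\}$ from the one already available for the binomial sequence $\{w_n(t)\}$ attached to $D-\tfrac12 D^2$, via the identity $w_n(t)=t^n y_{n-1}(1/t)$ recorded in (\ref{cwnbesselfunction}). First I would specialize Corollary~\ref{bcorollary} to $a=1$, $b=\tfrac12$, $p=1$, for which $f(x)=1-\sqrt{1-2x}$ (as noted just before Theorem~\ref{dthmcarlitzmeasure}); this gives, as formal power series (equivalently, as functions analytic near $0$),
\[
\sum_{n=0}^{\infty}\frac{w_{n}(s)}{n!}x^n=\exp\bigl(s-s\sqrt{1-2x}\bigr)
\]
for every $s$. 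Substituting $s=1/t$ and replacing $x$ by $tx$ yields
\[
F(x):=\sum_{n=0}^{\infty}\frac{w_{n}(1/t)\,t^{n}}{n!}\,x^n=\exp\left(\frac1t-\frac1t\sqrt{1-2tx}\right).
\]

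Next I would differentiate $F$ term by term. On one hand,
\[
F'(x)=\sum_{m=0}^{\infty}\frac{w_{m+1}(1/t)\,t^{m+1}}{m!}\,x^m,
\]
while on the other hand (\ref{cwnbesselfunction}), rewritten as $w_{m+1}(1/t)=t^{-(m+1)}y_{m}(t)$, identifies the coefficient of $x^m/m!$ in $F'$ as exactly $y_m(t)$. Hence $F'(x)=\sum_{m\ge0}\frac{y_m(t)}{m!}x^m$, the left-hand side of (\ref{dgeneratingbessel}).

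Finally I would compute $F'(x)$ from the closed form: since $\dfrac{d}{dx}\left(\dfrac1t-\dfrac1t\sqrt{1-2tx}\right)=\dfrac{1}{\sqrt{1-2tx}}$, the chain rule gives
\[
F'(x)=\frac{\exp\left(\frac1t-\frac1t\sqrt{1-2tx}\right)}{\sqrt{1-2tx}},
\]
which is the right-hand side of (\ref{dgeneratingbessel}). Comparing the two expressions for $F'$ completes the proof.

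No step here is genuinely an obstacle — it is a shift-and-differentiate argument — but the point requiring care is the index bookkeeping: (\ref{cwnbesselfunction}) links $y_m$ to $w_{m+1}$ rather than to $w_m$, which is precisely why one differentiation (not a mere substitution) of the $w$-generating function produces the $y$-generating function, and why the factor $(1-2tx)^{-1/2}$ appears. An alternative, more computational route would be to expand $\exp\bigl(\tfrac1t-\tfrac1t\sqrt{1-2tx}\bigr)$ using the Fuss/Bell-polynomial coefficients from Corollary~\ref{bcorollary}, multiply by $(1-2tx)^{-1/2}$, and match the resulting double sum against the explicit formula (\ref{cbesselpolynomials}) for $y_n(t)$; this works but is far less transparent than the derivative identity.
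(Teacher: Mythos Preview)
Your argument is correct and is essentially the same as the paper's: both use the relation $w_{m+1}(1/t)=t^{-(m+1)}y_m(t)$ from (\ref{cwnbesselfunction}) to recognize the $y$-generating function as the $x$-derivative of $\sum_{n}\frac{t^{n}w_n(1/t)}{n!}x^n=\exp\bigl(\tfrac1t-\tfrac1t\sqrt{1-2tx}\bigr)$, and then compute that derivative by the chain rule. The only cosmetic difference is the direction of the computation (you differentiate first and then identify coefficients, the paper identifies first and then differentiates).
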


\begin{proof}
By (\ref{cwnbesselfunction}) we have
\[
\sum_{n=0}^{\infty}\frac{y_{n}(t)}{n!}x^n
=\sum_{n=0}^{\infty}\frac{t^{n+1} w_{n+1}(1/t)}{n!}x^n
=\sum_{n=1}^{\infty}\frac{t^{n} w_{n}(1/t)}{(n-1)!}x^{n-1}\]
\[
=\frac{d}{dx}\left(\sum_{n=0}^{\infty}\frac{t^{n} w_{n}(1/t)}{n!}x^n\right)
=\frac{d}{dx}\exp\left(\frac{1}{t}-\frac{1}{t}\sqrt{1-2tx}\right),
\]
which leads to~(\ref{dgeneratingbessel}).
\end{proof}

Now we represent the Bessel polynomials (\ref{cbesselpolynomials}) as a moment sequence.

\begin{theorem}
For $n\ge0$  and $t>0$ we have
\begin{equation}
y_{n}(t)=\int_{0}^{\infty} u^n
\frac{\exp\left(\frac{-(u-1)^2}{2tu}\right)}{\sqrt{2\pi t u}}\,du.
\end{equation}
\end{theorem}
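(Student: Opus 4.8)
The plan is to derive this directly from Theorem~\ref{dthmcarlitzmeasure} and the identity~(\ref{cwnbesselfunction}) by a single rescaling of the integration variable, so that essentially no new computation is needed.

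First I would restate~(\ref{cwnbesselfunction}) in the convenient form $y_{n}(t)=t^{n+1}w_{n+1}(1/t)$, valid for all $n\ge0$ and $t>0$ (this is just $w_{n+1}(s)=s^{n+1}y_{n}(1/s)$ with $s=1/t$). Next I would apply Theorem~\ref{dthmcarlitzmeasure} with the parameter $1/t$ in place of $t$, obtaining
\[
w_{n+1}(1/t)=\int_{0}^{\infty}u^{n+1}\,\frac{(1/t)\exp\left(\frac{-(u-1/t)^{2}}{2u}\right)}{\sqrt{2\pi u^{3}}}\,du ,
\]
and hence, multiplying through by $t^{n+1}$,
\[
y_{n}(t)=t^{n}\int_{0}^{\infty}u^{n+1}\,\frac{\exp\left(\frac{-(u-1/t)^{2}}{2u}\right)}{\sqrt{2\pi u^{3}}}\,du .
\]

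Finally I would substitute $u=v/t$, so that $du=dv/t$ and
\[
\frac{(u-1/t)^{2}}{2u}=\frac{(v/t-1/t)^{2}}{2v/t}=\frac{(v-1)^{2}}{2tv}.
\]
Collecting the resulting powers of $t$ one finds $t^{n}\cdot t^{-(n+1)}\cdot t^{-1}\cdot t^{3/2}=t^{-1/2}$, and since $v^{n+1}v^{-3/2}=v^{n}v^{-1/2}$ the integrand becomes exactly $v^{n}\exp\left(\frac{-(v-1)^{2}}{2tv}\right)\big/\sqrt{2\pi t v}$, which is the desired identity.

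The only point needing attention is the bookkeeping of the powers of $t$ under the scaling $u=v/t$; there is no analytic subtlety, since positivity of the density and convergence of the integral are already contained in Theorem~\ref{dthmcarlitzmeasure}. Alternatively, the identity can be proved from scratch exactly as Theorem~\ref{dthmcarlitzmeasure} was: expand $-(v-1)^{2}/(2tv)=1/t-v/(2t)-1/(2tv)$, substitute $v=2tw$, and apply the integral representation~(\ref{cbesselfunctionformula}) of $K_{p}$ with $p=-n-1/2$ together with $y_{n}(t)=e^{1/t}\sqrt{2/(\pi t)}\,K_{-n-1/2}(1/t)$ from~(\ref{cbesselpolynomials}); this yields the same answer but bypasses the $w_{n}$'s.
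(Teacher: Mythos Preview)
Your proof is correct. Amusingly, you and the paper simply swap roles: the paper's primary proof is the direct Bessel-function computation (substitute $u=2tv$, apply~(\ref{cbesselfunctionformula}) with $p=-n-1/2$, and use~(\ref{cbesselpolynomials})), which you list as your alternative, while the paper remarks ``Alternatively, we could apply Theorem~\ref{dthmcarlitzmeasure} and relation~(\ref{cwnbesselfunction})'' --- exactly your primary argument via the rescaling $u=v/t$.
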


\begin{proof}
Substituting $u:=2tv$, applying (\ref{cbesselfunctionformula}) and (\ref{cbesselpolynomials}) we get:
\[
\int_{0}^{\infty} u^n
\frac{\exp\left(\frac{-(u-1)^2}{2tu}\right)}{\sqrt{2\pi t u}}\,du
=\frac{e^{1/t}}{\sqrt{2\pi t}}\int_{0}^{\infty} u^{n-1/2}
\exp\left(\frac{-u}{2t}-\frac{1}{2tu}\right)\,du.
\]
\[
=\frac{e^{1/t}(2t)^{n+1/2}}{\sqrt{2\pi t}}\int_{0}^{\infty} v^{n-1/2}
\exp\left(-v-\frac{1}{4t^2 v}\right)\,dv
\]
\[
=e^{1/t}\sqrt{\frac{2}{\pi t}}K_{-n-1/2}\left(1/t\right)=y_{n}(t),
\]
which concludes the proof.

Alternatively, we could apply Theorem~\ref{dthmcarlitzmeasure} and relation~(\ref{cwnbesselfunction}).
\end{proof}

Denote by $\nu_{t}$ the corresponding probability measure, i.e.
\begin{equation}
\nu_{t}:=\frac{\exp\left(\frac{-(u-1)^2}{2tu}\right)}{\sqrt{2\pi t u}}\chi_{(0,+\infty)}(u)\,du.
\end{equation}
Although $\left\{\nu_t\right\}_{t>0}$ is not a convolution semigroup, we will see that every $\nu_{t}$ is infinitely divisible.

\begin{theorem}
For $t>0$ we have
\begin{equation}\label{dconvolution}
\nu_{t}=\gamma_t*\mathbf{D}_{t}\mu_{1/t},
\end{equation}
where $\gamma_{t}$ denotes the gamma distribution with shape $1/2$ and scale $2t$:
\begin{equation}
\gamma_{t}=\frac{\exp\left(\frac{-u}{2t}\right)}{\sqrt{2\pi t u}}\chi_{(0,+\infty)}(u)\,du
\end{equation}
and $\mathbf{D}_{t}\mu_{1/t}$ is the dilation of $\mu_{1/t}$ by $t$:
\begin{equation}
\mathbf{D}_{t}\mu_{1/t}=
\frac{\exp\left(\frac{-(u-1)^2}{2tu}\right)}{\sqrt{2\pi t u^3}}\chi_{(0,+\infty)}(u)\,du.
\end{equation}
In particular, $\nu_{t}$ is infinitely divisible.
\end{theorem}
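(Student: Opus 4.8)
The natural approach is to compare characteristic functions, since the asserted decomposition (\ref{dconvolution}) is precisely a factorization of the characteristic function of $\nu_t$. First I would record the characteristic function of each factor on the right. Being the gamma distribution with shape $1/2$ and scale $2t$, $\gamma_t$ has characteristic function $(1-2tx\mathrm{i})^{-1/2}$. By Theorem~\ref{dthmcarlitzmeasure} the characteristic function of $\mu_{1/t}$ is $\exp\!\big(\frac1t-\frac1t\sqrt{1-2x\mathrm{i}}\,\big)$, and since the dilation $\mathbf{D}_t$ multiplies the underlying variable by $t$, the characteristic function of $\mathbf{D}_t\mu_{1/t}$ is obtained by replacing $x$ with $tx$, namely $\exp\!\big(\frac1t-\frac1t\sqrt{1-2tx\mathrm{i}}\,\big)$. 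Multiplying, the right-hand side of (\ref{dconvolution}) has characteristic function
\[
\frac{\exp\!\big(\frac1t-\frac1t\sqrt{1-2tx\mathrm{i}}\,\big)}{\sqrt{1-2tx\mathrm{i}}}.
\]

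It then remains to identify this with the characteristic function of $\nu_t$. From its density one reads off the bound $\le \frac{e^{1/t}}{\sqrt{2\pi t}}\,u^{-1/2}e^{-u/(2t)}$ on $(0,\infty)$, so $\nu_t$ has finite exponential moments on a neighbourhood of $0$; hence its moment generating function is analytic there, and by the preceding theorem together with Proposition~\ref{dbesselmoments} it equals the function in (\ref{dgeneratingbessel}), that is $\exp\!\big(\frac1t-\frac1t\sqrt{1-2tx}\,\big)\big/\sqrt{1-2tx}$. Substituting $x\mapsto x\mathrm{i}$ (analytic continuation) yields exactly the displayed expression, so by uniqueness of characteristic functions (\ref{dconvolution}) follows. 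Two alternative routes avoid the moment-determinacy step: one can compute $\int_0^\infty e^{ux\mathrm{i}}\,\nu_t(du)$ directly, completing the square to rewrite it as $\frac{e^{1/t}}{\sqrt{2\pi t}}\int_0^\infty u^{-1/2}\exp\!\big(-\frac{(1-2tx\mathrm{i})u}{2t}-\frac1{2tu}\big)\,du$ and evaluating by (\ref{cbesselfunctionformula}) with $p=1/2$, exactly as in the proofs of the two previous theorems; or one can verify the identity at the level of densities, convolving $\gamma_t$ with $\mathbf{D}_t\mu_{1/t}$ and, after the substitutions $s=uw$ followed by $v=(1-w)/w$, collapsing the inner integral to $\Gamma(1/2)=\sqrt{\pi}$ and recovering the density of $\nu_t$.

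Finally, infinite divisibility is immediate from (\ref{dconvolution}): $\gamma_t$ is infinitely divisible because every gamma distribution is, and $\mu_{1/t}$ is infinitely divisible, being a member of the convolution semigroup $\{\mu_s\}_{s>0}$ (equivalently, by the Kolmogorov representation recorded just before Theorem~\ref{dthmcarlitzmeasure}); infinite divisibility is preserved under dilation, so $\mathbf{D}_t\mu_{1/t}$ is infinitely divisible, and a convolution of two infinitely divisible measures is infinitely divisible. The one point I expect to need genuine care is the identification of the closed form above with $\widehat{\nu_t}$ rather than with the mere exponential generating function of the moments $y_n(t)$; this is exactly what the exponential-moment bound — or, alternatively, the explicit Bessel-integral evaluation — secures, and the remainder of the argument is routine bookkeeping.
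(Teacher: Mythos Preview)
Your approach is essentially the paper's: factor the characteristic function of $\nu_t$ as the product of the characteristic functions of $\gamma_t$ and $\mathbf{D}_t\mu_{1/t}$, then conclude infinite divisibility from that of the two factors. The paper simply asserts, citing Proposition~\ref{dbesselmoments}, that $\widehat{\nu_t}(x)=\exp\!\big(\tfrac1t-\tfrac1t\sqrt{1-2tx\mathrm{i}}\,\big)/\sqrt{1-2tx\mathrm{i}}$, without pausing over the passage from the exponential generating function of the moments to the characteristic function; you supply that justification (via the exponential-moment bound and analytic continuation, or alternatively the direct Bessel-integral evaluation), which is a welcome bit of extra care but not a different argument.
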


\begin{proof}
From Theorem~\ref{dbesselmoments} we see that the characteristic function of $\nu_{t}$:
\begin{equation}
\psi_{t}(x)=
\frac{\exp\left(\frac{1}{t}-\frac{1}{t}\sqrt{1-2tx\mathrm{i}}\right)}{\sqrt{1-2tx\mathrm{i}}},
\end{equation}
is the product of
\[
\frac{1}{\sqrt{1-2tx\mathrm{i}}},
\]
the characteristic function of $\gamma_{t}$ and
\[
\exp\left(\frac{1}{t}-\frac{1}{t}\sqrt{1-2tx\mathrm{i}}\right),
\]
the characteristic function of $\mathbf{D}_{t}\mu_{1/t}$, which proves (\ref{dconvolution}).
Since both $\gamma_t$ and $\mathbf{D}_{t}\mu_{1/t}$ are infinitely
divisible, so is their convolution $\nu_{t}$.
\end{proof}

Let us list some interesting integer sequences which arise from
the polynomials (\ref{cbinomialpoly}) and (\ref{cbesselpolynomials}),
together with their numbers in the On-line Encyclopedia
of Integer Sequences \cite{oeis} and the corresponding probability distribution.
For their combinatorial applications we refer to~\cite{oeis}:
\begin{enumerate}
\item $\mathrm{A144301}$: $w_{n}(1)$, moments of $\mu_1$,
\item $\mathrm{A107104}$: $w_{n}(2)$, moments of $\mu_{2}$,
\item $\mathrm{A043301}$: $w_{n+1}(2)/2$, moments of the density function $u\cdot\rho_{2}(u)/2$,
\item $\mathrm{A080893}$: $2^n\cdot w_{n}(1/2)$, moments of the density function $\rho_{1/2}\left(u/2\right)/2$,
\item $\mathrm{A001515}$: $y_{n}(1)$, moments of $\nu_{1}$,
\item $\mathrm{A001517}$: $y_{n}(2)$, moments of $\nu_{2}$,
\item $\mathrm{A001518}$: $y_{n}(3)$, moments of $\nu_{3}$,
\item $\mathrm{A065919}$: $y_{n}(4)$, moments of $\nu_{4}$.
\end{enumerate}

\end{document}